\newtheorem{lemma}{Lemma}
\newtheorem{proposition}[lemma]{Proposition}
\newtheorem{remark}[lemma]{Remark}
\newtheorem{example}[lemma]{Example}
\newtheorem{theorem}[lemma]{Theorem}
\newtheorem{definition}[lemma]{Definition}
\newcommand{\e}{\varepsilon}
\newcommand{\restr}{\ \rule{.4pt}{7pt}\rule{6pt}{.4pt}\ }
\newcommand{\weakstar}{\buildrel * \over \rightharpoonup}
\def\ZZ{\mathbb Z}
\def\NN{{\mathbb N}}
\def\HH{{\mathcal H}}
\def\RR{\mathbb R}
\title{Compactness by coarse-graining\\ in long-range lattice systems}
\author{Andrea Braides\\ \small Dipartimento di Matematica, Universit\`a di Roma Tor Vergata
\\ \small  via della ricerca scientifica 1, 00133 Roma, Italy\and  Margherita Solci \\ \ \small
DADU, Universit\`a di Sassari\\ \small
 piazza Duomo 6, 07041 Alghero (SS), Italy}
\date{}                                           
\begin{document}
\maketitle

\def\L{{\cal L}}

\noindent{\bf Abstract.} We consider energies on a periodic set $\L$ of the form 
$\sum_{i,j\in\L} a^\e_{ij}|u_i-u_j|$, defined on spin functions $u_i\in\{0,1\}$, and we suppose that the typical range of the interactions is $R_\e$ with $R_\e\to +\infty$, i.e., if $|i-j|\le R_\e$ then $a^\e_{ij}\ge c>0$. In a discrete-to-continuum analysis, we prove that the overall behaviour as $\e\to 0$ of such functionals is that of an interfacial energy. The proof is performed using a coarse-graining procedure which associates to scaled functions defined on $\e\L$ with equibounded energy a family of sets with equibounded perimeter. This agrees with the case of equibounded $R_\e$ and can be seen as an extension of coerciveness result for short-range interactions, but is different from that of other long-range interaction energies, whose limit exits the class of surface energies. A computation of the limit energy is performed in the case $\L=\ZZ^d$.

\smallskip
\noindent{\bf Keywords.} Homogenization, lattice systems, long-range interactions, interfacial energies, coarse graining

\smallskip
\noindent{\bf MSC Classifications.} 49J45, 49Q20, 35B27, 82B20

\section{Introduction}
In this paper we give a contribution to the general problem of the asymptotic analysis of systems of lattice interactions of the form 
\begin{equation}\label{1}
\sum_{i,j\in\L} a^\e_{ij}|u_i-u_j|
\end{equation}
where $\L$ is a periodic lattice in $\RR^d$, $\e>0$ is a parameter tending to $0$, and  $a^\e_{ij}$ are non-negative coefficients. 
These functionals depend on (scalar) `spin functions' with $u_i\in\{0,1\}$, somehow related to ferromagnetic energies in the terminology of Statistical Mechanics (where usually $u_i\in\{-1,1\}$). 


We investigate coerciveness properties related to such energies in a discrete-to-continuum process, where
the values $u^\e_i$ are identified as the values $u^\e(\e i)$ of a function defined on $\e\L$. In this way 
a continuum limit of $u^\e$ can be defined as a limit of their piecewise-constant interpolations; 
e.g., defined as $u^\e(x)= u^\e_i$ if the point of minimum distance of $\e\L$ from $x$ is $\e i$.
Coerciveness is established by exhibiting scales $s_\e$ such that if $u^\e_i$ are such that 
\begin{equation}\label{2}
\sum_{i,j\in\L} a^\e_{ij}|u^\e_i-u^\e_j|\le s_\e, 
\end{equation}
then the interpolations $u^\e$ are precompact in some topology and their limit points are in general non trivial. This can be expressed by proving that the domain of the $\Gamma$-limit of the scaled energies
\begin{equation}\label{2bis}
{1\over s_\e}\sum_{i,j\in\L} a^\e_{ij}|u^\e_i-u^\e_j| 
\end{equation}
in that topology
is not trivial.



The simplest case that has been previously treated \cite{CDL,ABC} is nearest-neighbour interactions; i.e, when $a^\e_{ij}$ are strictly positive only when $i,j$ are nearest neighbours (n.n.~for short) in the Delaunay triangulation of $\L$ (e.g.,
$|i-j|=1$ if $\L=\ZZ^d$). In this case choosing $s_\e={\e^{1-d}}$ gives that the scaled energies 
\begin{equation}\label{3}
\sum_{i,j\in\L\ i,j
 \,\rm n.n.} \e^{d-1} a^\e_{ij}|u^\e_i-u^\e_j|
\end{equation}
can be directly seen as a (possibly anisotropic) perimeter of the sets $\{x: u^\e(x)=1\}$ defined through the piecewise-constant interpolation of $u^\e$ from the scaled lattice $\e\L$. Then, the compactness properties of sets of equibounded perimeter ensure the coerciveness in $L^1_{\rm loc}(\RR^d)$ and the limits are characteristic functions. Moreover, the $\Gamma$-limit of the energies can be described
by an energy defined on sets of finite perimeter $A$, which, in the simplest homogenous case, takes the form
\begin{equation}\label{4}
\int_{\partial A} \varphi(\nu)d\HH^{d-1}.
\end{equation}
The same scaling works for finite-range interactions; i.e., when $a^\e_{ij}$ is $0$ if $|i-j|>R$ for some $R$, even though the energies in that case must be interpreted as a non-local perimeter \cite{BP}.
The finiteness of the range of the interactions can be weakened to a decay condition that can be quantified as 
\begin{equation}\label{4bis}
\sup\Bigl\{\sum_{j\in\L\setminus \{i\}}a^\e_{i j}|j-i|: i\in\L,\ \e>0\Bigr\}<+\infty,
\end{equation}
even though the limit energies may have a non-local part if the `tails' of these series are not uniformly negligible \cite{AG}. We note that such analysis is valid beyond pair potentials and generalizes to classes of many-point interactions (see \cite{BK}).

If the decay assumptions \eqref{4bis} do not hold then the `natural' scaling for the energies may be different from the `surface scaling' $\e^{d-1}$, and we might exit the class of interfacial energies. An extreme case is that of `dense graphs', which is better stated in a bounded domain; i.e., when 
considering energies
\begin{equation}\label{5}
\sum_{i,j\in\L\cap{1\over\e} Q} a^\e_{ij}|u_i-u_j|,
\end{equation}
with $Q$ a cube in $\RR^d$, and suppose that $a^\e_{ij}\ge c>0$ for (a positive percentage of) all interactions.
In that case the scaling is $s_\e= {\e}^{-2d}$, and the limit behaviour is described by a more abstract limit functional called a `graphon' energy \cite{Benjamini,Borgs2012,Lovasz2006,Lovasz2012}, which can be viewed as a relaxation of a double integral on $(0,1)$ of the form
\begin{equation}\label{5}
\int_{(0,1)\times(0,1)}W(x,y)|v(x)-v(y)|\,dx\,dy
\end{equation}
defined on $BV((0,1);\{0,1\})$ after a complex and rather abstract relabeling procedure and identification of functions defined on $Q$ with functions defined on $(0,1)$ (see \cite{BCD}).
For sparse graphs (i.e., graphs which are not dense according to the definition above) and interactions not satisfying the decay conditions \eqref{4bis}, the correct scaling, the relative convergence and the form of the $\Gamma$-limit is a complex open problem. In \cite{BCS} an example is given of one-dimensional energies with range $R_\e={1/\sqrt\e}$ such that
a non-trivial $\Gamma$-limit exists for $s_\e={1/\sqrt\e}$ with respect to the $L^\infty$-weak$^*$ convergence, but it is defined on {\it all} functions of bounded variation with values in $[0,1]$ (and not only those with values in $\{0,1\}$). In that example a crucial issue is the topology of the graph of the connections where $a^\e_{ij}\neq0$.

In this paper we consider an intermediate case; i.e., when the decay condition described above does not hold, and $a^\e_{ij}\ge c>0$ when $|i-j|\le R_\e$ with $R_\e>\!>1$ but the topology of the interactions within that range is that of a `dense' graph.
We further make the assumption $\e R_\e<\!<\!1$ so that the discrete-to-continuum process makes sense. We note that this latter condition is not restrictive upon a redefinition of $\e$ in terms of $R_\e$; e.g.~taking $R_\e^{-1/2}$ in the place of $\e$. We keep the dependence of our system on $R_\e$ and $\e$ separate since these parameters may be defined independently in applications. Under these conditions we have 
$$
s_\e={R_\e^{d+1}\over \e^{d-1}},
$$
and with this scaling functions of equi-bounded energy interpolated on the lattice $\e\L$ converge to a characteristic function of a set of finite perimeter. The main argument for obtaining this result is by coarse-graining. Namely, we average the values of $u^\e$ for interaction on cubes with side length of order $R_\e$, so that we can think of those averages as labelled on $\e R_\e\ZZ^d$. We prove first that those labels for which averages are not essentially close to $0$ and $1$ are negligible; hence, we may regard such functions as spin functions defined on a cubic lattice. Then, we show
that the arguments used for nearest-neighbour interactions of \cite{ABC} can be adapted for the interpolated functions
of the averages. Once a limit set of finite perimeter is obtained we can prove the convergence of the interpolations of the original functions to the same set.

As an application of this scaling argument we show that for 
\begin{equation}\label{6}
a^\e_{ij}= a\Bigl({i-j\over R_\e}\Bigr),
\end{equation}
where $a$ is a positive function with $\int a(\xi)|\xi|d\xi$ finite,
the $\Gamma$-limit of the energies 
\begin{equation}\label{7}
F_\e(u)={\e^{d-1}\over R_\e^{d+1}}\sum_{i,j\in\mathbb Z^d} a^\e_{ij}|u_i-u_j|,
\end{equation}
defined on the cubic lattice of $\RR^d$,
is given by an energy as in \eqref{4} with
\begin{equation}\label{8}
\varphi(\nu)=\int_{\RR^d} a(\xi)|\langle \xi,\nu\rangle|d\xi.
\end{equation}
In particular, if $a$ is radially symmetric then \eqref{4} is simply a multiple of the perimeter of $A$.
It is interesting to note that in a sense the case $R_\e\to+\infty$ can be seen as a limit of the case of $R_\e$ finite, for which the $\Gamma$-limit is of the form \eqref{4} with the integrand $\varphi(\nu)$ given by a discretization of the integral in \eqref{8}
(as seen in \cite{Ch,BG, BLB} in a slightly different context). This convergence can be re-obtained using the results in \cite{GS}, where transportation maps are used to transform discrete energies in convolution functionals.

\section{A compactness result}
We denote by $Q_R=[-R/2,R/2)^d$ the (semi-open) coordinate cube centered in $0$ and with side length $R$ in $\RR^d$, by $B_R$ the open ball  centered in $0$ and with side length $R$ in $\RR^d$, and by $e_1,\dots, e_d$ the vectors of the canonical basis of $\mathbb R^d$. Moreover, $\mathcal H^{d-1}$ denotes the $d-1$-dimensional Hausdorff measure and $|\cdot|$ the Lebesgue $d$-dimensional measure.  

Let $\L\subset\RR^d$ be a discrete periodic set. We can suppose without loss of generality that it is periodic in the coordinate directions with period $1$; i.e., $$\L+e_i=\L\hbox{ for all }i\in\{1,\ldots,d\}.$$
The Voronoi cells of $\L$ are defined as
$$
V_i=\{x\in\RR^d: |x-i|<|x-j|\hbox{ for all }j\in\L, \ j\neq i\,\}.
$$
By the periodicity of $\L$ there exists a constant $C_\L>0$ such that
\begin{equation}\label{Voro}
{1\over C_\L}\le |V_i|\le C_\L,\qquad {1\over C_\L}\le\HH^{d-1}(\partial V_i)\le C_\L.
\end{equation}

Each spin function $u\colon\e\mathbb \L\to\{0,1\}$ is identified with its piecewise-constant interpolation, which is the $L^\infty$ function  defined by
$$
u(x)= u(\e i)\ \hbox{ if } x\in\e V_i\hbox{\,; i.e., } |x-\e i|<|x-\e j| \hbox{ for all }j\in\L, \ j\neq i\,.
$$
Note that by \eqref{Voro} the $L^1$ norm of such $u$ is equivalent to
$\e^d\#\{i: u_i\neq 0\}$.

In this section we prove coerciveness properties for energies $E_\e$ defined on
spin functions  $u\colon\e\L\to\{0,1\}$  by 
\begin{equation}\label{fe} 
E_\e(u)=\frac{\e^{2d}}{\eta^{d+1}}\sum_{i,j\in\L,\ i-j\in Q_{\eta/\e}} |u_i-u_j|,
\end{equation}
where we denote $u_i=u(\e i)$, and $\eta=\eta_\e$ are such that
\begin{equation}\label{eta}
\lim_{\e\to 0}\eta_\e=\lim_{\e\to0}{\e\over\eta_\e}=0.
\end{equation}

\begin{lemma}[Compactness]\label{lemma}
Let $u^\e$ be spin functions such that $E_\e(u^\e)$ is equibounded. Then, up to subsequences, the corresponding piecewise-constant interpolations, still denoted by $u^\e$, converge in $L^1_{\rm loc}(\RR^d)$ to $u=\chi_A$, where $A$ is a set of finite perimeter.
\end{lemma}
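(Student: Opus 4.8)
The plan is to implement the coarse-graining procedure described in the introduction. Fix a mesoscopic scale $\rho_\e = \e R_\e$ where $R_\e = \eta_\e/\e$, so that $\rho_\e = \eta_\e \to 0$ while $R_\e = \eta_\e/\e \to +\infty$. I partition $\RR^d$ into cubes $Q^z_\e = \rho_\e z + Q_{\rho_\e}$ indexed by $z\in\ZZ^d$, and for each $z$ I define the average $m^\e_z$ to be the fraction of lattice points $\e i \in Q^z_\e$ with $u^\e_i = 1$ (counted with the weights $|V_i|$, which are comparable to constants by \eqref{Voro}, so the precise choice is immaterial). The first step is to show that the set of indices $z$ for which $m^\e_z$ is "ambiguous", say $m^\e_z \in (\delta, 1-\delta)$ for a fixed small $\delta$, is negligible in the sense that $\rho_\e^d \#\{z : m^\e_z \in (\delta,1-\delta)\} \to 0$, and moreover the total $L^1$-discrepancy between $u^\e$ and its thresholded version $v^\e_z := \lfloor m^\e_z + 1/2 \rfloor$ tends to $0$. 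This is where the scaling $\e^{2d}/\eta^{d+1}$ is used: if $m^\e_z \in (\delta,1-\delta)$, then inside the doubled cube $2Q^z_\e$ there are at least $c\,\delta(1-\delta) R_\e^{2d}$ pairs $(i,j)$ with $i-j\in Q_{\eta/\e}$ and $u^\e_i \ne u^\e_j$ (a Poincaré/Chebyshev-type count on a "dense" index graph — each such cube contains $\sim R_\e^d$ points, any two of which interact), each contributing $\e^{2d}/\eta^{d+1}$ to the energy; summing over such $z$ and using that each pair is counted a bounded number of times, the number of ambiguous cubes is bounded by $C\,\eta^{-d}\, E_\e(u^\e)$, hence after multiplying by $\rho_\e^d = \eta^d$ it is $O(\eta^d)\cdot(\text{const}) \to 0$. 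The analogous estimate also controls the $L^1$-distance between $u^\e$ and $v^\e$ on the non-ambiguous region (there the average is $\delta$-close to $0$ or $1$, so the discrepancy is at most $\delta$ per cube, and we let $\delta\to0$ along a diagonal argument).

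The second step is to treat $v^\e$, now a genuine spin function on the cubic lattice $\rho_\e\ZZ^d$, and show its interpolations have equibounded perimeter (suitably scaled). Here I follow the nearest-neighbour argument of \cite{ABC}: it suffices to bound $\rho_\e^{d-1}\#\{(z,z') : z,z'\text{ n.n. in }\ZZ^d,\ v^\e_z \ne v^\e_{z'}\}$. If two adjacent mesoscopic cubes $Q^z_\e$, $Q^{z'}_\e$ have $v^\e_z = 1$, $v^\e_{z'}=0$, then (since $m^\e_z \ge 1-\delta$, $m^\e_{z'}\le\delta$) there are at least $c(1-2\delta)R_\e^{2d}$ pairs $(i,j)$ with $\e i\in Q^z_\e$, $\e j\in Q^{z'}_\e$, $u^\e_i\ne u^\e_j$; and since adjacent cubes have side $\rho_\e = \eta$, any such $i,j$ satisfy $|i-j|\le C\eta/\e$, i.e. $i-j$ lies in $Q_{C\eta/\e}$. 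To make this land exactly in the interaction range $Q_{\eta/\e}$, I instead group the lattice into cubes of side $\eta/(2d\e)$ or so, or equivalently absorb the constant $C$ into the definition of the mesoscopic scale — the point is that the geometry is robust to changing the constant. Each such "disagreeing pair" contributes $\e^{2d}/\eta^{d+1}$ to $E_\e$, so the number of disagreeing n.n. mesoscopic pairs is $\le C\eta^{-2d}R_\e^{-2d}E_\e(u^\e) = C\e^{2d}\eta^{-2d}R_\e^{-2d}\cdot(\e^{-2d}\eta^{2d})\cdots$ — more cleanly: number of disagreeing pairs $\times\,\e^{2d}\eta^{-(d+1)}\,c\,R_\e^{2d} \le E_\e(u^\e)$, so that number is $\le C\eta^{d+1}\e^{-2d}R_\e^{-2d}E_\e = C\eta^{d-1}E_\e$, and multiplying by $\rho_\e^{d-1} = \eta^{d-1}$ gives a perimeter bound of order $\eta^{2(d-1)}$... so in fact I must be more careful and the right bookkeeping gives exactly $\rho_\e^{d-1}\#\{\text{disagreeing n.n.}\} \le C\,E_\e(u^\e)$, which is the desired uniform perimeter bound for the interpolations $\tilde v^\e$ of $v^\e$ on $\rho_\e\ZZ^d$.

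The third step is to conclude: by the compactness theorem for sets of equibounded perimeter (Rellich for $BV$), up to a subsequence $\tilde v^\e \to \chi_A$ in $L^1_{\rm loc}$ with $A$ of finite perimeter; since $\|u^\e - \tilde v^\e\|_{L^1_{\rm loc}} \to 0$ by Step 1 (after the diagonal argument in $\delta$), also $u^\e \to \chi_A$ in $L^1_{\rm loc}$, and being an $L^1$-limit of $\{0,1\}$-valued functions the limit is itself $\{0,1\}$-valued, i.e. $u = \chi_A$. I expect the main obstacle to be Step 1 — specifically, making the "ambiguous cubes are negligible" estimate quantitatively correct with the stated scaling $\e^{2d}/\eta^{d+1}$ and simultaneously controlling the $L^1$-error uniformly, since this is precisely the place where the "dense graph within range $R_\e$" hypothesis (the $a^\e_{ij}\ge c$ for $|i-j|\le R_\e$) enters and must be exploited as a quadratic lower bound on the number of frustrated pairs; the diagonal argument in $\delta\to0$ coupled to $\e\to0$ also needs to be set up carefully so that the perimeter bound of Step 2 survives.
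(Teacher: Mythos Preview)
Your approach is essentially the paper's: coarse-grain into cubes of mesoscopic side $\sim\eta$, show ``ambiguous'' cubes are negligible via a quadratic frustrated-pair count, bound the perimeter of the thresholded majority-phase function by the same count between adjacent cubes, and conclude by BV compactness. Two minor points where the paper is tidier: it fixes the cube side to $\eta/4$ from the outset (so that any two points in the same or in adjacent cubes automatically satisfy $i-j\in Q_{\eta/\e}$), a constant you only flag in Step~2 but which is already needed for the Step~1 count (and your bound on the number of ambiguous cubes should read $C\eta^{1-d}E_\e$, giving total measure $O(\eta)$); and for the $\delta$-dependence, instead of a diagonal argument the paper observes that the limit sets $K_1^\delta$ are monotone in $\delta$ and hence, by the covering identity $|\RR^d\setminus(K_0^\delta\cup K_1^\delta)|=0$, actually independent of $\delta$.
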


\begin{proof} The idea of the proof is to subdivide the set of indices $\L$ into disjoint cubes of side-length $\eta/4\e$. The factor $4$ is chosen so that if we consider $i,j$ indices  belonging to two neighbouring cubes with this side-length, respectively, then $i-j\in Q_{\eta/\e}$ so that they interact in energy $E_\e$. In such a way we can associate to each $u^\e$ and each such smaller cube the value $0$ or $1$ of the `majority phase', if such majority phase is sufficiently close to $0$ and $1$, respectively, while we prove that the remaining cubes can be neglected. In this way we will construct coarse-grained functions for which the energy $E_\e$ can be viewed as a standard nearest-neighbour ferromagnetic energy and the compactness then follows by interpreting spin functions as sets of finite perimeter.
\bigskip

For any $k\in\mathbb Z^d$ we set 
$$Q_k^\e=\frac{\eta}{4\e}k+Q_{\frac{\eta}{4\e}}.$$
For $u:\e\L\to\{0,1\}$ we define
$$D(\e,k)(u)=\frac{\big|\#\{i\in Q_k^\e\cap \L: u_i=1\}-\#\{i\in Q_k^\e\cap \L: u_i=0\}\big|}{\# (Q_k^\e\cap \L)}.$$ 
Note that $D(\e,k)(u)$ measures how much the function $u$ is close to its majority phase; more precisely, $D(\e,k)(u)=1$ if $u$ is constant on $Q_k^\e\cap \L$, while $D(\e,k)(u)=0$ if the values of $u$ are equally distributed between $0$ and $1$ in $Q_k^\e\cap \L$.

With fixed $\delta\in(0,1)$, we define 
$$
\mathcal B^\e(u)=\{k\in\mathbb Z^d:D(\e,k)(u)<1-\delta\}.
$$
The  $Q_k^\e$ corresponding to $k\in \mathcal B^\e(u)$ will be considered as the cubes where $u$ is not close to a phase $1$ or $0$. We will first show that such cubes are negligible. 
Indeed, note that thanks to the first inequality in \eqref{Voro} the number of pairs of indices $i,j$ within $Q^\e_k$ are of order $(\eta/\e)^{2d}$ and hence there exists $C_\delta>0$ such that if $k\in \mathcal B^\e(u)$, then the number of `interactions within the cube' $Q_k^\e$ is at least
$C_\delta (\frac{\eta}{\e})^{2d}$; namely,
$$\#\{(i,j): i,j\in Q_k^\e\cap\L, u_i\neq u_j\}
\geq C_\delta \Big(\frac{\eta}{\e}\Big)^{2d}.$$

Hence, if $u^\e$ are as in the hypotheses of the lemma; that is, $F_\e(u^\e)\leq c$, we have
\begin{equation*}
\#\mathcal B^\e(u^\e)\leq\frac{c}{C_\delta}\eta^{1-d}.
\end{equation*}
We can estimate the measures 
\begin{equation}\label{misura-diversi}
\Big|\bigcup_{k\in\mathcal B^\e(u^\e)} \e Q_k^\e\Big|=
\#\mathcal B^\e(u^\e){\eta^d\over 4^d}
\leq\frac{c}{4^dC_\delta}\eta,
\end{equation}
\begin{equation}\label{bordo-diversi}
\HH^{d-1}\Bigl(\partial\bigcup_{k\in\mathcal B^\e(u^\e)} \e Q_k^\e\Big)=
\#\mathcal B^\e(u^\e){2d\,\eta^{d-1}\over 4^{d-1}}
\leq\frac{2cd}{4^{d-1}C_\delta}.
\end{equation}

As for the indices such that $D(\e,k)(u)\geq 1-\delta$, we subdivide them into the sets
\begin{eqnarray*}
\mathcal A^\e_1(u)=\{ k\in\mathbb Z^d: D(\e,k)(u)\geq 1-\delta, \#\{i\in Q_k^\e: u_i=1\}>\#\{i\in Q_k^\e: u_i=0\}\}\\
\mathcal A^\e_0(u)=\{ k\in\mathbb Z^d: D(\e,k)(u)\geq 1-\delta, \#\{i\in Q_k^\e: u_i=1\}<\#\{i\in Q_k^\e: u_i=0\}\}
\end{eqnarray*}
and define
\begin{eqnarray*}
K_j^\e(u)=\bigcup_{k\in \mathcal A^\e_j(u)} \e Q_k^\e\hbox{ for }j=0,1.
\end{eqnarray*}

In order to estimate the measure of the boundary of $K_1^\e(u)$ we estimate the number of cubes $Q_k^\e$ with  
$k\in \mathcal A^\e_1(u)$ which have a side in common with a cube $Q_{k'}^\e$ with 
$k'\in\mathcal A^\e_0(u)$, parameterized on the set
$$
\mathcal A^\e(u):=\{k\in\mathcal A_1^\e(u): k+e_j\in\mathcal A_0^\e(u) \hbox{ for some } j=1,\dots, d\}
$$
To that end, note that if $D(\e,k)(u)\ge 1-\delta$ and $k\in \mathcal A^\e_1(u)$ then 
$$
\#\{i\in Q_k^\e\cap \L: u_i=1\}\ge \Bigl(1-{\delta\over 2}\Bigr)\#\{i\in Q_k^\e\cap \L\},
$$
so that, again recalling the first inequality in \eqref{Voro}, each site $i\in Q_k^\e$ such that $u_i=1$ interacts with $C'_\delta (\frac{\eta}{\e})^{d}$
and conversely for each site $i\in Q_{k'}^\e$ such that $u_i=0$. Hence, the 
interacting pairs $(i,j)\in Q_k^\e\times Q_{k'}^\e$ are at least 
$C'_\delta(\frac{\eta}{\e})^{2d}$.

Hence, 
$$
E_\e(u)\ge \frac{\e^{2d}}{\eta^{d+1}}\#\mathcal A^\e(u)C''_\delta(\frac{\eta}{\e})^{2d}= C''_\delta\#\mathcal A^\e(u)\eta^{d-1}
$$
so that 
$$
\#\mathcal A^\e(u)\le  {1\over C''_\delta}E_\e(u)\eta^{1-d}.
$$

For the functions $u^\e$ we then obtain 
$$
\#\mathcal A^\e(u^\e)\le  \frac{c}{C''_\delta}\eta^{1-d},
$$
so that
\begin{eqnarray*}
\HH^{d-1}(\partial K_1^\e(u^\e))&\le & 2d\biggl(
\HH^{d-1}\Bigl(\bigcup_{k\in\mathcal A^\e(u^\e)} \e Q_k^\e\Bigr)
+\HH^{d-1}\Bigl(\bigcup_{k\in\mathcal B^\e(u^\e)} \e Q_k^\e\Bigr)\biggr)\\
&\le& 2d\Bigl(\#\mathcal A^\e(u^\e) {2d\,\eta^{d-1}\over 4^{d-1}}+\frac{2cd}{4^{d-1}C_\delta}\Bigr)\le C^{'''}_\delta
\end{eqnarray*}
where $C^{'''}_\delta$ is a positive constant depending only on $d, c$ and $\delta$. 
By the compactness of sets of equibounded perimeter this shows that the characteristic functions of the sets $K_1^\e(u^\e)$ are compact in $L^1_{\rm loc}(\RR^d)$. The symmetric argument shows also that $K_0^\e(u^\e)$ are compact in $L^1_{\rm loc}(\RR^d)$. Moreover, if we denote a limit of the sets  $K_j^\e(u^\e)$ by $K_j$ then we have 
\begin{equation}\label{tutto}|\RR^d\setminus (K_0\cup K_1)|=0\end{equation} by
\eqref{misura-diversi}. We highlight the possible dependence of the sets obtained by this procedure on $\delta$ by renaming them 
$K_1^\delta$ and $K_0^\delta$.

Note that if $\delta<\delta'$ then 
$$
K_1^{\delta'}\subset K_1^\delta\hbox{ and } 
K_0^{\delta'}\subset K_0^\delta.
$$
Since in both cases \eqref{tutto} holds, then we must have $K_1^{\delta'}=K_1^\delta$ 
and $K_0^{\delta'}=K_0^\delta$, so that these sets 
are independent of $\delta$ and we may go back to denoting them by $K_1$ 
and $K_0$.

We can now prove the convergence of $u^\e$. Fixed $\delta<1$ as above, we write
$$
u^\e= u^\e\chi_{K_1^\e(u^\e)}+ u^\e\chi_{K_0^\e(u^\e)}+ u^\e\chi_{\RR^d\setminus (K_1^\e(u^\e)\cup K_0^\e(u^\e))}.
$$
By \eqref{misura-diversi} the last term converges to $0$ in $L^1(\RR^d)$
As for the other two terms we localize the convergence by restricting to a cube $Q_R$.
Note that for $k\in \mathcal A^\e_1(u^\e)$ we have
$$
\|u^\e- 1\|_{L^1(\e Q^\e_k)}\le C(1-\delta)\eta^d,
$$
so that 
$$
\|u^\e\chi_{K_1^\e(u^\e)\cap Q_R}- \chi_{K_1^\e(u^\e)\cap Q_R}\|_{L^1(\RR^d)}\le C(1-\delta)R^d 
$$
where $C$ denotes a positive constant not depending on $\delta.$
Analogously for $k\in \mathcal A^\e_0(u^\e)$ we have
$$
\|u^\e\|_{L^1(\e Q^\e_k)}\le C(1-\delta)\eta^d,
$$
and hence 
$$
\|u^\e\chi_{K_0^\e(u^\e)\cap Q_R}\|_{L^1(\RR^d)}\le C(1-\delta)R^d.
$$
We then have, by the local convergence of $K^\e_j(u^\e)$, 
\begin{eqnarray*}&&
\limsup_{\e\to0}\|u^\e \chi_{K_1^\e(u^\e)\cap Q_R}-\chi_{K_1\cap Q_R}\|_{L^1(\RR^d)}
\\
&\le&
\limsup_{\e\to 0}\Bigl(\|u^\e \chi_{K_1^\e(u^\e)\cap Q_R}-\chi_{K^\e_1(u^\e)\cap Q_R}\|_{L^1(\RR^d)}+
\| \chi_{K_1^\e(u^\e)\cap Q_R}-\chi_{K_1\cap Q_R}\|_{L^1(\RR^d)}\Bigr)
\\
&\le&C(1-\delta)R^d,
\end{eqnarray*}
and
$$
\limsup_{\e\to0}\|u^\e \chi_{K_0^\e(u^\e)\cap Q_R}\|_{L^1(\RR^d)}
\le C(1-\delta)R^d,
$$
so that, by the arbitrariness of $\delta$, $u^\e$ converge locally to $\chi_{K_1}$.
\end{proof}

\begin{remark}\rm 
The proof of Lemma \ref{lemma} works exactly in the same way if we suppose that `almost all' pairs of indices of $\L$ within  $Q_{\eta/\e}$ interact; namely, if in place of energy \eqref{fe} we consider
\begin{equation}\label{fe-a} 
E_\e(u)=\frac{\e^{2d}}{\eta^{d+1}}\sum_{i,j\in\L:\ i-j\in Q_{\eta/\e}} a^\e_{ij} |u_i-u_j|,
\end{equation}
with the requirement that there exists $c>0$ such that
\begin{equation}\label{lim-a} 
\lim_{\e\to 0}{\#\{(i,j):i,j\in x+Q_{\eta/\e}: a^\e_{ij}\ge c\}\over \#\{(i,j):i,j\in x+Q_{\eta/\e}\}}=1
\end{equation}
uniformly in $x\in\RR^d$. Condition \eqref{lim-a} is trivially satisfied by energies \eqref{fe} for $c=1$.

Note that condition \eqref{lim-a} cannot be relaxed to `having a proportion' of pairs of indices of $\L$ within  $Q_{\eta/\e}$ interacting, however large this proportion may be below $1$; i.e., it is not sufficient that
\begin{equation}\label{lim-a2} 
\lim_{\e\to 0}{\#\{(i,j):i,j\in x+Q_{\eta/\e}: a^\e_{ij}\ge c\}\over \#\{(i,j):i,j\in x+Q_{\eta/\e}\}}\ge \lambda,
\end{equation}
for any $\lambda<1$. To check this, we may consider the following example: choose $\L=\RR^d$, fix $N\in\NN$, and define
$$
a^\e_{ij}=\begin{cases}
1 &\hbox{ if }i-j\in Q_{\eta/\e}\hbox{ and both }i,j\not\in N\ZZ\\
0 &\hbox{ otherwise.}
\end{cases}
$$
Then \eqref{lim-a2} holds for $\lambda= \bigl(1-{1\over N^d}\bigr)^2$ but, if we define 
$$
u^\e_i=\begin{cases}
1 &\hbox{ if }i\not\in N\ZZ\\
0 &\hbox{ if }i\in N\ZZ,
\end{cases}
$$
then $u^\e$ converge weakly in $L^1_{\rm loc}(\RR^d)$ to the constant $1-{1\over N^d}$.
Since $E_\e(u_\e)=0$ this shows that Lemma \ref{lemma} does not hold.

In this example the subset $N\ZZ^d$ of $\ZZ^d$ can be considered as a `perforation' of the domain and can be treated as such, considering convergence only of the restriction of the functions to $\ZZ^d\setminus N\ZZ^d$ (see Section 3 of \cite{BCPS}). However, the situation can be more complicated if we take
$$
a^\e_{ij}=\begin{cases}
1 &\hbox{ if }i-j\in Q_{\eta/\e}\hbox{ and both }i,j\not\in N\ZZ\\
c_\e &\hbox{ otherwise,}
\end{cases}
$$
that can be regarded as representing a `high-contrast medium', for which the effect of the `perforation' cannot be neglected and for some values of $c_\e$ may give a `double porosity' effect \cite{BCPS}.
\end{remark}

\section{Homogenization of long-range lattice systems}

Let $a:\RR^d\to [0,+\infty)$ be such that $a(\xi)|\xi|$ is Riemann integrable on bounded sets and such that
\begin{equation}\label{a1}
\int_{\RR^d} a(\xi)|\xi|\,d\xi<+\infty,
\end{equation}
and  
\begin{equation}\label{a2}
 a(\xi)\geq c_0 \hbox{ if } \ |\xi|\leq r_0
\end{equation}
for some $c_0,r_0>0$.

Given $\e,\eta=\eta_\e$ satisfying \eqref{eta} we define the coefficients
$$
a_{ij}^\e=a_{i-j}^\e=a\Big(\frac{\e (i-j)}{\eta}\Big)
$$ 
for $i,j\in\ZZ^d$,
and the energies 
\begin{equation}\label{fe-a} 
F_\e(u)=\frac{\e^{2d}}{\eta^{d+1}}\sum_{i,j\in\mathbb Z^d} a^\e_{i-j}|u_i-u_j|.
\end{equation}

\begin{definition}\label{conv}
A family $\{u^\e\}$ of functions $u^\e:\e\ZZ^d\to\{0,1\}$ {\rm converges} to a set $A\subset\RR^d$ if the piecewise-constant interpolations of $u^\e$ converge to the characteristic function $\chi_A$ in $L^1_{\rm loc}(\RR^d)$ as $\e\to0$.
\end{definition}

By hypothesis \eqref{a2} we may apply Compactness Lemma \ref{lemma}, obtaining that the family $\{F_\e\}$ is coercive with respect to this convergence.

\begin{proposition}
Let $\{u^\e\}$ be such that $\sup_\e F_\e(u^\e)<+\infty$. Then, up to subsequences, there exists a set of finite perimeter $A$ such that $u^\e$ converge to $A$ in the sense of Definition {\rm\ref{conv}}.
\end{proposition}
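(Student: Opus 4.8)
The plan is to show that the energies $F_\e$ of \eqref{fe-a} fall within the framework of the Compactness Lemma~\ref{lemma}, so that the statement follows immediately. First I would observe that the coefficients $a^\e_{ij}=a(\e(i-j)/\eta)$ are automatically nonnegative, and that \eqref{a2} provides exactly the kind of lower bound needed: if $|i-j|\le \eta r_0/\e$, then $|\e(i-j)/\eta|\le r_0$, hence $a^\e_{ij}\ge c_0>0$. Thus, up to replacing $\eta$ by $r_0\eta$ (which does not affect the scaling conditions \eqref{eta}, since $r_0$ is a fixed constant), all pairs of indices $i,j$ with $i-j\in Q_{\eta/\e}$ satisfy $a^\e_{ij}\ge c_0$. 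In particular condition \eqref{lim-a} of the Remark is satisfied — indeed it holds with the proportion equal to $1$ identically, not merely in the limit — so the version \eqref{fe-a} of the Compactness Lemma applies.

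The second point is that the energy $F_\e$ differs from the truncated energy $E_\e$ of \eqref{fe-a} only by the addition of the interaction terms with $i-j\notin Q_{\eta/\e}$, which are nonnegative; hence $F_\e(u)\ge E_\e(u)$ for every spin function $u$, where $E_\e$ is built with the coefficients $a^\e_{ij}$ restricted to $i-j\in Q_{\eta/\e}$ (and these, as just noted, are bounded below by $c_0$). Therefore a uniform bound $\sup_\e F_\e(u^\e)<+\infty$ yields a uniform bound on $E_\e(u^\e)$, and we may invoke Lemma~\ref{lemma} (in the form of the Remark, with $c=c_0$) to conclude that, up to subsequences, the piecewise-constant interpolations of $u^\e$ converge in $L^1_{\rm loc}(\RR^d)$ to $\chi_A$ for some set $A$ of finite perimeter. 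By Definition~\ref{conv} this is precisely the assertion that $u^\e$ converges to $A$ in the required sense.

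The only subtlety — and it is a very minor one rather than a genuine obstacle — is to check that the replacement $\eta \rightsquigarrow r_0\eta$ preserves the two limits in \eqref{eta}: since $r_0$ is a fixed positive constant, $r_0\eta_\e\to0$ and $\e/(r_0\eta_\e)\to0$ are equivalent to the original conditions, so the hypotheses of Lemma~\ref{lemma} remain in force. I would also note in passing that the finiteness condition \eqref{a1} is not needed for this compactness statement (it will be used later for the computation of the $\Gamma$-limit), so the proof only uses \eqref{a2}. Consequently the proposition is essentially a one-line corollary of the Compactness Lemma, and I would present it as such, spelling out only the reduction $F_\e\ge E_\e$ and the verification that the lower bound \eqref{a2} gives a full range $Q_{\eta/\e}$ of uniformly elliptic interactions after the harmless rescaling of $\eta$.
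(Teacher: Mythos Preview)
Your proposal is correct and follows exactly the route the paper takes: the paper simply says ``By hypothesis \eqref{a2} we may apply Compactness Lemma~\ref{lemma}'', and you have spelled out the (straightforward) reduction $F_\e\ge c_0 E_\e$ behind that sentence. The only cosmetic point is that the rescaling constant should in general be $2r_0/\sqrt d$ rather than $r_0$, so that the cube $Q_{\eta'/\e}$ sits inside the Euclidean ball of radius $r_0\eta/\e$; this changes nothing in the argument.
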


This coerciveness property justifies the computation of the $\Gamma$-limit of $F_\e$ with respect to the convergence in 
Definition \ref{conv}. We use standard notation in the theory of sets of finite perimeter (see e.g.~\cite{LN98,Maggi}).

\begin{theorem}[Homogenization]\label{theorem}
The functionals defined in \eqref{fe-a} $\Gamma$-converge with respect to the convergence in Definition {\rm\ref{conv}} to the 
functional $F$ defined on sets of finite perimeter by
\begin{equation}\label{f-a} 
F(A)=\int_{\partial^* A} \varphi_a(\nu)d\HH^{d-1},
\end{equation}
where $\partial^* A$ denotes the reduced boundary of $A$, $\nu$ the outer normal to $A$ and $\varphi_a$ is given by
\begin{equation}\label{fi-a}
\varphi_a(\nu)=\int_{\RR^d} a(\xi)|\langle\xi,\nu\rangle|d\xi.
\end{equation}
\end{theorem}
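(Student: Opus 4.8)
The plan is to prove the $\Gamma$-lower and $\Gamma$-upper bounds separately, compactness for the convergence of Definition~\ref{conv} being already supplied by Lemma~\ref{lemma} (applicable thanks to \eqref{a2}); I first record two preliminary observations. First, $\varphi_a$ in \eqref{fi-a} is a norm on $\RR^d$: for each $\xi$ the map $\nu\mapsto|\langle\xi,\nu\rangle|$ is a seminorm, hence its average against the non-negative weight $a(\xi)\,d\xi$ is convex and positively $1$-homogeneous, and it is finite by \eqref{a1} and strictly positive on $S^{d-1}$ by \eqref{a2}; consequently $F$ in \eqref{f-a} is $L^1_{\rm loc}$-lower semicontinuous on sets of finite perimeter and continuous along suitable polyhedral approximations, so density reductions on the target are available. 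Second, for $R>0$ write $a_R=a\,\chi_{B_R}$ and let $F_\e^R$, $\varphi_{a_R}$, $F^R$ be the corresponding truncated objects; since all summands are non-negative we have $F_\e\ge F_\e^R$, while $\varphi_{a_R}\uparrow\varphi_a$ and $F^R\uparrow F$ as $R\to+\infty$ by monotone convergence.

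\emph{Upper bound.} For a half-space $A=\{\langle x,\nu\rangle<0\}$ I would take $u^\e_i=\chi_A(\e i)$. For a fixed difference $\xi=i-j$, $|u^\e_i-u^\e_j|=1$ exactly when $\langle i,\nu\rangle$ lies in an interval of length $|\langle\xi,\nu\rangle|$, so in a bounded open set $U$ with $\mathcal H^{d-1}(\partial A\cap\partial U)=0$ the number of such $i$ is $(1+o(1))\,|\langle\xi,\nu\rangle|\,\e^{1-d}\,\mathcal H^{d-1}(\partial A\cap U)$. Inserting this into $F_\e(u^\e,U)$ and substituting $z=\e\xi/\eta$ (mesh $\e/\eta\to0$) displays it as a Riemann sum, convergent because $a(\xi)|\xi|$ is Riemann integrable on bounded sets and integrable at infinity, with limit $\mathcal H^{d-1}(\partial A\cap U)\int_{\RR^d}a(z)|\langle z,\nu\rangle|\,dz=\varphi_a(\nu)\,\mathcal H^{d-1}(\partial A\cap U)$. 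For a polyhedral set $A$ I would take $u^\e_i=\chi_A(\e i)$ again: the contribution of the pairs with $|\e(i-j)/\eta|\le R$ converges, by the half-space computation localised at each facet $\Sigma_\ell$ plus a negligible-in-$\eta$ bound for the pairs whose connecting segment meets a neighbourhood of the $(d-2)$-skeleton of $\partial A$, to $F^R(A)$, whereas the pairs with $|\e(i-j)/\eta|>R$ contribute at most $C_A\int_{|\xi|>R}a(\xi)|\xi|\,d\xi=:\rho(R)$ since $\partial A$ has finite perimeter; as $F^R(A)\le F(A)$ and $\rho(R)\to0$, letting $\e\to0$ and then $R\to+\infty$ gives $\limsup_\e F_\e(u^\e)\le F(A)$. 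A general set of finite perimeter is reached by $L^1_{\rm loc}$-approximation with polyhedral $A_n$ with $F(A_n)\to F(A)$, lower semicontinuity of the $\Gamma$-upper bound, and a diagonal argument.

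\emph{Lower bound.} Let $u^\e\to A$ with $\sup_\e F_\e(u^\e)<+\infty$, $A$ of finite perimeter. Dropping the non-negative terms with $|\e(i-j)/\eta|>R$ gives $F_\e(u^\e)\ge F_\e^R(u^\e)$, and on the bounded range $\{|z|\le R\}$ the sum $F_\e^R(u^\e)$ differs from the convolution functional $G^R_\eta(v):=\frac1\eta\int_{\RR^d}\!\int_{B_R}a(z)\,|v(x)-v(x-\eta z)|\,dz\,dx$, evaluated at the piecewise-constant interpolation $v=u^\e$, only by a sum-versus-integral error vanishing as $\e/\eta\to0$ by Riemann integrability of $a(\xi)|\xi|$ (the discrete-to-convolution reduction recalled in the Introduction). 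Since the interpolations converge in $L^1_{\rm loc}$ to $\chi_A$, the nonlocal-to-local $\Gamma$-convergence of convolution perimeters with a general non-negative kernel (of Bourgain--Brezis--Mironescu / D\'avila / Ponce type; see the references in the Introduction) gives $\liminf_\e G^R_\eta(u^\e)\ge\int_{\partial^*A}\varphi_{a_R}(\nu)\,d\mathcal H^{d-1}=F^R(A)$, hence $\liminf_\e F_\e(u^\e)\ge F^R(A)$; letting $R\to+\infty$ with $F^R\uparrow F$ concludes. (Alternatively, a fully discrete proof: localise the energies as measures, extract a weak$^*$ limit $\mu$, and show $\mu\ge\varphi_a(\nu_A)\,\mathcal H^{d-1}$ on $\partial^*A$ by blowing up at $\mathcal H^{d-1}$-a.e.\ point of $\partial^*A$, where $A$ looks like a half-space, the half-space bound then following by slicing the energy along each lattice direction $\xi=m\xi'$ with $\xi'$ primitive, using that a $\{0,1\}$-valued one-dimensional trace crossing the interface satisfies $\sum_k|v_k-v_{k-m}|\ge m$, and recognising the same Riemann sum.)

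\emph{The main difficulty.} The crux is the interplay between the divergence of the interaction range in lattice units (of order $\eta/\e\to+\infty$) and the discrete-to-continuum passage: one cannot pass to the limit term by term in $\xi$, and must instead arrange the sum over $\xi$ so that, after rescaling by $z=\e\xi/\eta$, it becomes a genuine Riemann sum for $\int_{\RR^d}a(z)|\langle z,\nu\rangle|\,dz$. Making this rigorous simultaneously uses $\e/\eta\to0$ (the mesh), the Riemann integrability of $a(\xi)|\xi|$ on bounded sets (convergence of the Riemann sums), and $\int a(\xi)|\xi|\,d\xi<+\infty$ (uniform smallness of the tails $|\e\xi/\eta|>R$; in the upper bound this also requires the negligibility of pairs bridging distinct facets of a polyhedron). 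Everything else — compactness, the structure of $F$, the density reduction — is either supplied by Lemma~\ref{lemma} or is standard.
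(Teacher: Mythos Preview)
Your upper bound is essentially the paper's: take $u^\e_i=\chi_A(\e i)$ for polyhedral $A$, count crossings along each direction $\xi$, recognise the Riemann sum for $\varphi_a$, control the tails via $\int_{|\xi|>R}a(\xi)|\xi|\,d\xi$, and finish by density. Nothing to add there.

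For the lower bound your \emph{parenthetical alternative} (blow-up, slicing along each $\xi$, counting that a $\{0,1\}$-trace crossing the interface contributes at least one jump, and reassembling the Riemann sum) is exactly what the paper does; so you do have a correct plan on the table.

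Your \emph{main} lower-bound route, however, has a real gap at the step ``$F_\e^R(u^\e)$ differs from $G^R_\eta(u^\e)$ only by a sum-versus-integral error vanishing as $\e/\eta\to0$''. Writing $h_\eta(z)=\|u^\e-\tau_{\eta z}u^\e\|_{L^1}$, one has exactly
\[
F_\e^R(u^\e)=\frac1\eta\Bigl(\frac\e\eta\Bigr)^{\!d}\!\!\sum_{z_0\in(\e/\eta)\ZZ^d\cap B_R}\!\!a(z_0)\,h_\eta(z_0),
\qquad
G_\eta^R(u^\e)=\frac1\eta\int_{B_R}a(z)\,h_\eta(z)\,dz,
\]
so the object being Riemann-summed is $a(z)h_\eta(z)$, not $a(z)|z|$. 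The only general control on the modulus of continuity of $h_\eta$ is $|h_\eta(z)-h_\eta(z_0)|\le h_\eta(z-z_0)\le|\eta(z-z_0)|\,|Du^\e|(\RR^d)$, i.e.\ it is governed by the \emph{total variation of the piecewise-constant interpolation}, which is \emph{not} uniformly bounded under the sole hypothesis $F_\e(u^\e)\le C$ (the compactness lemma gives bounded perimeter only for the coarse-grained sets $K_1^\e$, not for $\{u^\e=1\}$ itself; one can have $|Du^\e|\sim\eta/\e\to\infty$ while $F_\e$ stays bounded). Equivalently, rewriting $G_\eta^R(u^\e)=\frac{\e^{2d}}{\eta^{d+1}}\sum_\xi\bar a^\e_\xi\sum_i|u^\e_{i+\xi}-u^\e_i|$ with $\bar a^\e_\xi$ the average of $a$ over a cube of side $\e/\eta$ around $\e\xi/\eta$, you must show that replacing $\bar a^\e_\xi$ by $a(\e\xi/\eta)$ costs $o(1)$; Riemann integrability of $a$ makes $\bar a^\e_\xi-a(\e\xi/\eta)$ small for \emph{most} $\xi$, but you have no a~priori reason why the weights $\sum_i|u^\e_{i+\xi}-u^\e_i|$ do not concentrate on the bad $\xi$'s. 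This can likely be repaired (e.g.\ approximate $a$ from below by a continuous $\tilde a\le a$ and combine with a bound on $\sum_{|\e\xi/\eta|\le R}\sum_i|u^\e_{i+\xi}-u^\e_i|$ obtained from the coercivity \eqref{a2} via chaining), but it is not the one-line ``Riemann integrability'' remark you give, and it is precisely the point where the divergence of the range in lattice units bites. The paper sidesteps this entirely by working directly at the discrete level in the blow-up, which is your alternative.
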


\begin{proof}
In order to better illustrate the proof in the general $d$-dimensional case we first deal with the one-dimensional case, in which we may highlight the coarse-graining procedure without the technical complexities of the higher-order geometry. 
In this case
we may rewrite the energies as
$$
F_\e(u)=\frac{\e^{2}}{\eta^{2}}\sum_{\xi\in\ZZ}\sum_{i\in\mathbb Z} a^\e_{\xi}|u_{i+\xi}-u_i|.
$$
The relevant 
computation is that of the lower bound for the target $A=[0,+\infty)$. Let $u^\e$ converge to $A$. For each $\xi\in\ZZ\setminus\{0\}$ and $i\in\{1,\ldots,|\xi|\}$ we consider the function $u^\e$ restricted to $\e i+\e \xi\ZZ$. By the $L^1_{\rm loc}$ convergence, we may suppose that each such restriction changes value; i.e., there exists some $k_{i,\xi}\in\ZZ$ such that 
$$
u^\e(\e i+\e k_{i,\xi}\xi)=0 \hbox{ and } u^\e(\e i+\e (k_{i,\xi}+1)\xi)=1.
$$
The set of $\xi$ and $i$ for which this does not hold is negligible for $\e\to0$; the precise proof is directly given for the $d$-dimensional functionals below. For each $\xi$ we then have
$$
\sum_{i=1}^{|\xi|} \sum_{k\in\ZZ}a^\e_{\xi}|u^\e_{i+(k+1)\xi}-u^\e_{i+k\xi}|\ge |\xi| a\Bigl({\e\over\eta}\xi\Bigr),
$$
so that 
\begin{eqnarray*}
\liminf_{\e\to0} F_\e(u^\e)\ge\liminf_{\e\to0}{\e^2\over\eta^2}\sum_{\xi\in\ZZ} |\xi| a\Bigl({\e\over\eta}\xi\Bigr)=\liminf_{\e\to0}\sum_{\xi\in\ZZ}  {\e\over\eta} a\Bigl({\e\over\eta}\xi\Bigr)\Bigl|{\e\over\eta}\xi\Bigr|,
\end{eqnarray*}
the latter being a Riemann sum giving the integral $\displaystyle\int_{\RR}a(\xi)|\xi|\,d\xi$, which is $F(A)$.

\bigskip
We now deal with the $d$-dimensional case. The proof of the lower bound follows the argument above, but is more complex since we must take into account the direction of the interaction vectors $\xi$.

We prove the inequality by applying the blow-up technique (see \cite{fomu} and \cite{BMS}, and for instance \cite{BS,BP,NSS} for the discrete setting). 

We assume that the sequence $\{F_\e(u^{\e})\}$ is equibounded and that $u^\e$ converge in $L^1_{\rm loc}(\RR^d)$ to $u=\chi_A$, where $A$ is a set of finite perimeter. Up to subsequences, we can assume that $\liminf_{\e\to 0} F_\e(u^\e)=\lim_{\e\to 0}F_\e(u^\e)$. 
We define the localized energy on an open set $U$ by
$$
F_\e(u^{\e};U)= \frac{\e^{2d}}{\eta^{d+1}}\sum_{i\in U} \sum_{j\in\mathbb Z^d}a^\e_{i-j}|u^\e_i-u^\e_j|, 
$$
and define the measures $\mu_\e(U)=F_\e(u^{\e};U)$; since the family $\{\mu_\e\}$ is equibounded, we can assume that  $\mu_\e\weakstar \mu$ up to subsequences. 
Now, let $\lambda=\mathcal H^{d-1}\restr \partial^\ast A$; the lower bound inequality 
follows if we show that for $\mathcal H^{d-1}$-a.a.~$x\in\partial^\ast A$ we have 
$$\frac{d\mu}{d \lambda}(x)\geq \varphi_a(\nu),$$ 
where $\frac{d\mu}{d \lambda}$ denotes the Radon-Nikodym derivative of $\mu$ with respect to the Hausdorff $d-1$-dimensional measure $\lambda$. 
By the Besicovitch Derivation Theorem, for $\mathcal H^{d-1}$-a.a.~$x\in\partial^\ast A$ we have that 
$$\frac{d\mu}{d \lambda}(x)=\lim_{\varrho\to 0}\frac{\mu(Q_\varrho^\nu(x))}{\lambda(Q_\varrho^\nu(x))},$$
where $\lambda$ is the measure $\mathcal H^{d-1}\restr \partial^\ast A$, $\nu$ is the normal vector to $\partial^\ast A$ at $x$ and 
$Q_\varrho^\nu(x)$ is a cube centered in $x$ with side length $\varrho$ and a face orthogonal to $\nu$. 
We can fix $x=0$ and denote $Q_\varrho^\nu(0)$ by $Q_\varrho^\nu$. 
Hence, the lower bound follows if we show that 
\begin{equation}\label{lower}
\lim_{\varrho\to 0}\liminf_{\e\to 0}\frac{1}{\varrho^{d-1}}F_\e(u^{\e};Q_\nu^\varrho) \ge \varphi_a(\nu). 
\end{equation}
We may therefore assume that $\varrho=\varrho_\e$ be such that $\frac{\e}{\varrho}\to 0$ and the scaled functions $u^\e(\frac{\e}{\varrho}i)$ interpolated on the lattice $\frac{\e}{\varrho}\ZZ^d$ converge to the characteristic function of the half space $H^\nu=\{x: \langle x,\nu\rangle<0\}$ on $Q_\nu^1$. We define 
$$
A_\e:=\{x\in Q_\nu^1: u^\e(x)\neq \chi_{H^\nu}(x)\},
$$
so that $|A_\e|\to 0$.

If we define
$$I_{\e/\varrho}^\xi=\Bigl\{i\in\mathbb Z^d: \frac{\e}{\varrho} i, \frac{\e}{\varrho} (i+\xi)\in Q_\nu^1\Bigr\}$$
then  
$$
F_\e(u^{\e};Q_\nu^\varrho)\ge \frac{\e^{2d}}{\eta^{d+1}}\sum_{\xi\in\mathbb Z^d}a \Bigl(\frac{\e}{\eta}\xi\Bigr)
\sum_{i\in I_{\e/\varrho}^\xi}
\Bigl|u^\e(\frac{\e}{\varrho} (i+\xi))-u^\e(\frac{\e}{\varrho}i)\Bigr|.$$

\begin{figure}[h!]
\centerline{\includegraphics[width=0.6\textwidth]{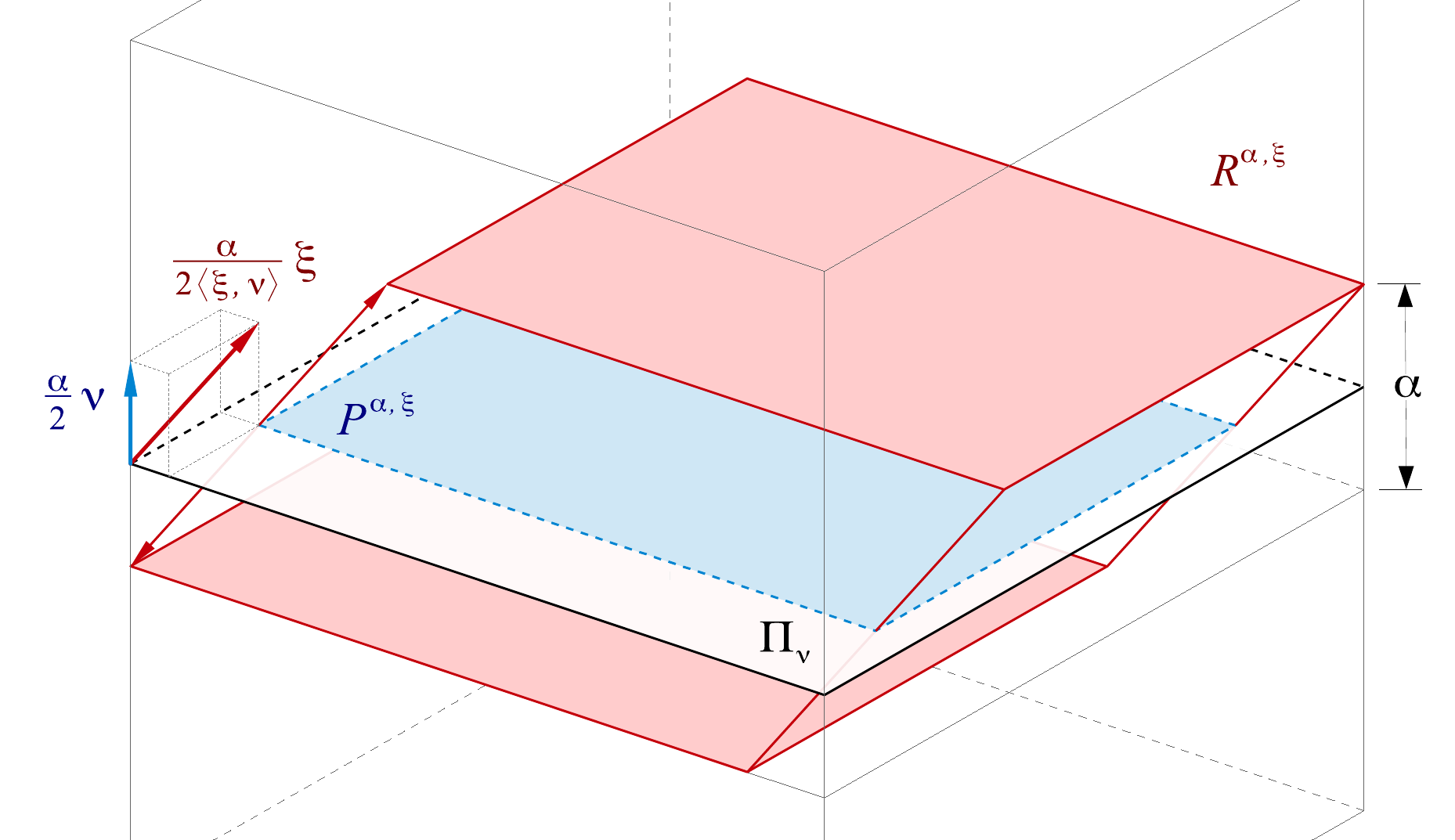}}
\caption{The set $R^{\alpha,\xi}$}
\label{Fig1}
\end{figure} 
We begin by estimating
$$
\sum_{i\in I_{\e/\varrho}^\xi}
\Bigl|u^\e(\frac{\e}{\varrho} (i+\xi))-u^\e(\frac{\e}{\varrho}i)\Bigr|=\#\Bigl\{i\in I_{\e/\varrho}^\xi: 
u^\e(\frac{\e}{\varrho} (i+\xi))\neq u^\e(\frac{\e}{\varrho}i)\Bigr\}.$$
With fixed  $\alpha\in (0,1)$ for each $\xi\in\mathbb Z^d$ satisfying 
\begin{equation}\label{alfa-xi}\Bigl|\langle \frac{\xi}{|\xi|},\nu\rangle\Bigr|\geq  \frac{\alpha}{\sqrt{1+\alpha^2}},
\end{equation}
we define 
$$P^{\alpha,\xi}=\Big\{y\in \Pi_\nu\cap Q_\nu^1: y\pm \frac{\alpha}{2|\langle \xi,\nu \rangle|} \xi\in Q_\nu^1\Big\},$$
which is not empty by \eqref{alfa-xi}, and 
$$R^{\alpha,\xi}=\Big\{x\in Q_\nu^1: x=y+t \xi, \ y\in P^{\alpha,\xi}, \ -\frac{\alpha}{2|\langle \xi,\nu \rangle|}\leq t\leq  \frac{\alpha}{2|\langle \xi,\nu \rangle|}\Big\}$$ 
(see Fig.~\ref{Fig1}). 
Furthermore, we fix $\beta$ with
\begin{equation}\label{beta}\beta> \frac{\alpha}{\sqrt{1+\alpha^2}}.
\end{equation} 
Since we will restrict our arguments to sets $P^{\alpha,\xi}$ and $R^{\alpha,\xi}$ above with $\xi$ satisfying
\begin{equation}\label{beta-xi}\Bigl|\langle \frac{\xi}{|\xi|},\nu\rangle\Bigr|\geq  \beta;
\end{equation}
we omit the dependence of the sets $P^{\alpha,\xi}$ and $R^{\alpha,\xi}$ on $\nu$, since the estimates we will obtain will be independent on $\nu$.

As in the one-dimensional case we consider the functions restricted to the discrete lines $\frac{\e}{\varrho}i+\frac{\e}{\varrho}\xi\mathbb Z$.
The parameter $\alpha$ is introduced so as to estimate the number of sites of such discrete lines inside $Q_\nu^1$.
We then set
$$B^{\alpha,\xi}_{\e/\varrho}=\Bigl\{i\in \mathbb Z^d: \frac{\e}{\varrho}i\in R^{\alpha,\xi} \hbox{ and } 
u^\e \hbox{ is not constant in } \Big(\frac{\e}{\varrho}i+\frac{\e}{\varrho}\xi\mathbb Z\Big)\cap R^{\alpha,\xi}\Bigr\}.$$
Note that if $i\in B^{\alpha,\xi}_{\e/\varrho}$ then $i+k\xi\in B^{\alpha,\xi}_{\e/\varrho}$ for all $k$ with $\frac{\e}{\varrho}(i+k\xi)\in R^{\alpha,\xi}$, so that, if we define the equivalence relation $i\sim i^\prime$ if $i-i^\prime\in \xi\mathbb Z$, 
we may set
$$\widetilde B^{\alpha,\xi}_{\e/\varrho}=B^{\alpha,\xi}_{\e/\varrho}/\sim$$ 
getting
$$
\#\Bigl\{i\in I_{\e/\varrho}^\xi: 
u^\e(\frac{\e}{\varrho} (i+\xi))\neq u^\e(\frac{\e}{\varrho}i)\Bigr\}\geq 
\#\widetilde B^{\alpha,\xi}_{\e/\varrho}.$$

We can estimate the number of `discrete lines' intersecting $R^{\alpha,\xi}$ as
\begin{eqnarray*}
\#\Bigl(\Bigl\{i\in\mathbb Z^d: \frac{\e}{\varrho}i\in R^{\alpha,\xi}\Bigr\}/\sim\Bigr)  &\geq&  
\frac{|R^{\alpha,\xi}|}{\bigl(\frac{\e}{\varrho}\bigr)^d}\frac{1}{
\frac{\alpha}{\frac{\e}{\varrho}|\langle\xi,\nu\rangle|}
}-C_\alpha|\xi|\Bigl(\frac{\e}{\varrho}\Bigr)^{2-d} \\
&\geq&  
\mathcal H^{d-1}(P^{\alpha,\xi})|\langle\xi,\nu\rangle|\Bigl(\frac{\e}{\varrho}\Bigr)^{1-d}-
C_\alpha|\xi|\Bigl(\frac{\e}{\varrho}\Bigr)^{2-d},
\end{eqnarray*}
where the last term is an error term accounting for the cubes intersecting the boundary of $ R^{\alpha,\xi}$.

Note that to every element of the complement of  $\tilde B^{\alpha,\xi}_{\e/\varrho}$ 
there correspond at least
$\lfloor \frac{\alpha}{\frac{\e}{\varrho} |\langle \xi, \nu\rangle|}\rfloor$ points in ${\e\over\varrho}\ZZ^d\cap A_\e\}$, so that
for $\e$ sufficiently small we get
\begin{eqnarray*}
&&\#\Bigl(\Bigl\{i\in \mathbb Z^d: \frac{\e}{\varrho}i\in R^{\alpha,\xi} \hbox{ and } 
u^\e \hbox{ constant in } \big(\frac{\e}{\varrho}i+\frac{\e}{\varrho}\xi\mathbb Z\big)\cap R^{\alpha,\xi}\Bigr\} 
/\sim \Bigr)\\
&&\hspace{1cm}\leq \frac{|A_\e|}{\bigl(\frac{\e}{\varrho}\bigr)^d} \frac{\frac{\e}{\varrho}|\langle \xi, \nu\rangle|}{\alpha} + C'_\alpha|\xi|\Bigl(\frac{\e}{\varrho}\Bigr)^{2-d}\\
&&\hspace{1cm}= \frac{1}{\alpha}|A_\e| \Bigl(\frac{\e}{\varrho}\Bigr)^{1-d}
|\langle \xi, \nu\rangle| + C'_\alpha|\xi|\Bigl(\frac{\e}{\varrho}\Bigr)^{2-d},
\end{eqnarray*}
with $C'_\alpha$ again a positive constant accounting for boundary cubes,
and hence \begin{equation}\label{stima}
\#\widetilde B^{\alpha,\xi}_{\e/\varrho}\geq
\mathcal H^{d-1}(P^{\alpha,\xi})|\langle\xi,\nu\rangle|\Bigl(\frac{\e}{\varrho}\Bigr)^{1-d}-\frac{1}{\alpha}|A_\e| \Bigl(\frac{\e}{\varrho}\Bigr)^{1-d}
|\langle \xi, \nu\rangle|
-
(C_\alpha+C'_\alpha)|\xi|\Bigl(\frac{\e}{\varrho}\Bigr)^{2-d}.
\end{equation}
By \eqref{beta-xi} we can estimate
\begin{equation}\label{misuraP}
\mathcal H^{d-1}(P^{\alpha,\xi})\geq \Bigl(1-\frac{\alpha}{2|\langle\xi,\nu\rangle|}|\xi-\langle\xi,\nu\rangle\nu|\Bigr)^{d-1}
\geq \Big(1-\frac{\alpha}{2}\sqrt{\frac{1}{\beta^2}-1}\Big)^{d-1}
\end{equation}
(see also Fig.~\ref{Fig1}), and hence, upon fixing $R>0$ and introducing the set 
$$\Xi_\e^\nu(R,\beta)=
\Bigl\{\xi\in \mathbb Z^d: |\xi|\leq \frac{\eta}{\e}R, \Bigl|\langle \frac{\xi}{|\xi|},\nu \rangle\Bigr|\geq \beta\Bigr\},$$ 
by \eqref{stima} and \eqref{misuraP} we have  
\begin{eqnarray*}
\frac{1}{\varrho^{d-1}}F_\e(u^{\e};Q_\nu^\varrho) &\geq & 
\frac{1}{\varrho^{d-1}}\frac{\e^{2d}}{\eta^{d+1}}\sum_{\xi\in\Xi_\e^\nu(R,\beta)}a \Bigl(\frac{\e}{\eta}\xi\Bigr)
\mathcal H^{d-1}(P^{\alpha,\xi})|\langle\xi,\nu\rangle|\Bigl(\frac{\e}{\varrho}\Bigr)^{1-d}\\
&&-\frac{1}{\varrho^{d-1}}\frac{\e^{2d}}{\eta^{d+1}}\sum_{\xi\in\Xi_\e^\nu(R,\beta)}a \Bigl(\frac{\e}{\eta}\xi\Bigr)
\frac{1}{\alpha}|A_\e| \Bigl(\frac{\e}{\varrho}\Bigr)^{1-d}
|\langle \xi, \nu\rangle|\\
&&-
(C_\alpha+C'_\alpha)\Bigl(\frac{\e}{\varrho}\Bigr)^{2-d}
\frac{1}{\varrho^{d-1}}\frac{\e^{2d}}{\eta^{d+1}}
\sum_{\xi\in\Xi_\e^\nu(R,\beta)}a \Bigl(\frac{\e}{\eta}\xi\Bigr)|\xi|\\
&\geq& 
\Big(1-\frac{\alpha}{2}\sqrt{\frac{1}{\beta^2}-1}\Big)^{d-1}
\sum_{\xi\in\Xi_\e^\nu(R,\beta)}\Bigl(\frac{\e}{\eta}\Bigr)^{d}a \Bigl(\frac{\e}{\eta}\xi\Bigr)
|\langle \frac{\e}{\eta}\xi,\nu\rangle|\\
&&-\frac{1}{\alpha}|A_\e| \sum_{\xi\in\Xi_\e^\nu(R,\beta)}\Bigl(\frac{\e}{\eta}\Bigr)^{d}a \Bigl(\frac{\e}{\eta}\xi\Bigr)
|\langle \frac{\e}{\eta}\xi,\nu\rangle|\\
&&-
(C_\alpha+C'_\alpha)
\frac{\e}{\varrho}
\sum_{\xi\in\Xi_\e^\nu(R,\beta)}\Bigl(\frac{\e}{\eta}\Bigr)^{d}a \Bigl(\frac{\e}{\eta}\xi\Bigr)\Bigl|\frac{\e}{\eta}\xi\Bigr|.
\end{eqnarray*}

Since $|A_\e|\to 0$ and 
\begin{eqnarray*}
 \lim_{\e\to 0}\sum_{\xi\in\Xi_\e^\nu(R,\beta)}\Bigl(\frac{\e}{\eta}\Bigr)^{d}a \Bigl(\frac{\e}{\eta}\xi\Bigr)
\Bigl|\langle \frac{\e}{\eta}\xi,\nu\rangle\Bigr|
&=&\int_{\{|\xi|\leq R, |\langle \xi/|\xi|, \nu\rangle|\ge \beta\}}a(\xi)|\langle \xi,\nu\rangle|\, d\xi,\\
 \lim_{\e\to 0}\sum_{\xi\in\Xi_\e^\nu(R,\beta)}\Bigl(\frac{\e}{\eta}\Bigr)^{d}a \Bigl(\frac{\e}{\eta}\xi\Bigr)
\Bigl|\frac{\e}{\eta}\xi\Bigr|
&=&\int_{\{|\xi|\leq R, |\langle \xi/|\xi|, \nu\rangle|\ge \beta\}}a(\xi)| \xi|\, d\xi,
\end{eqnarray*}
we get\begin{eqnarray*}\liminf_{\e\to0}
\frac{1}{\varrho^{d-1}}F_\e(u^{\e};Q_\nu^\varrho) &\geq & 
\Big(1-\frac{\alpha}{2}\sqrt{\frac{1}{\beta^2}-1}\Big)^{d-1}
\int_{\{|\xi|\leq R, |\langle \xi/|\xi|, \nu\rangle|\ge\beta\}}a(\xi)|\langle\xi,\nu\rangle|\, d\xi .
\end{eqnarray*}
Note that by \eqref{beta} we may let first  $\alpha\to 0$ and then $\beta\to 0$.
We eventually obtain
\begin{eqnarray*}
\liminf_{\e\to0}\frac{1}{\varrho^{d-1}}F_\e(u^{\e};Q_\nu^\varrho) \geq 
\int_{\{|\xi|\le R\}}a(\xi)|\langle\xi,\nu\rangle|\, d\xi,
\end{eqnarray*}
which, by the arbitrariness of $R$, gives \eqref{lower}.

\begin{figure}[h!]
\centerline{\includegraphics[width=0.8\textwidth]{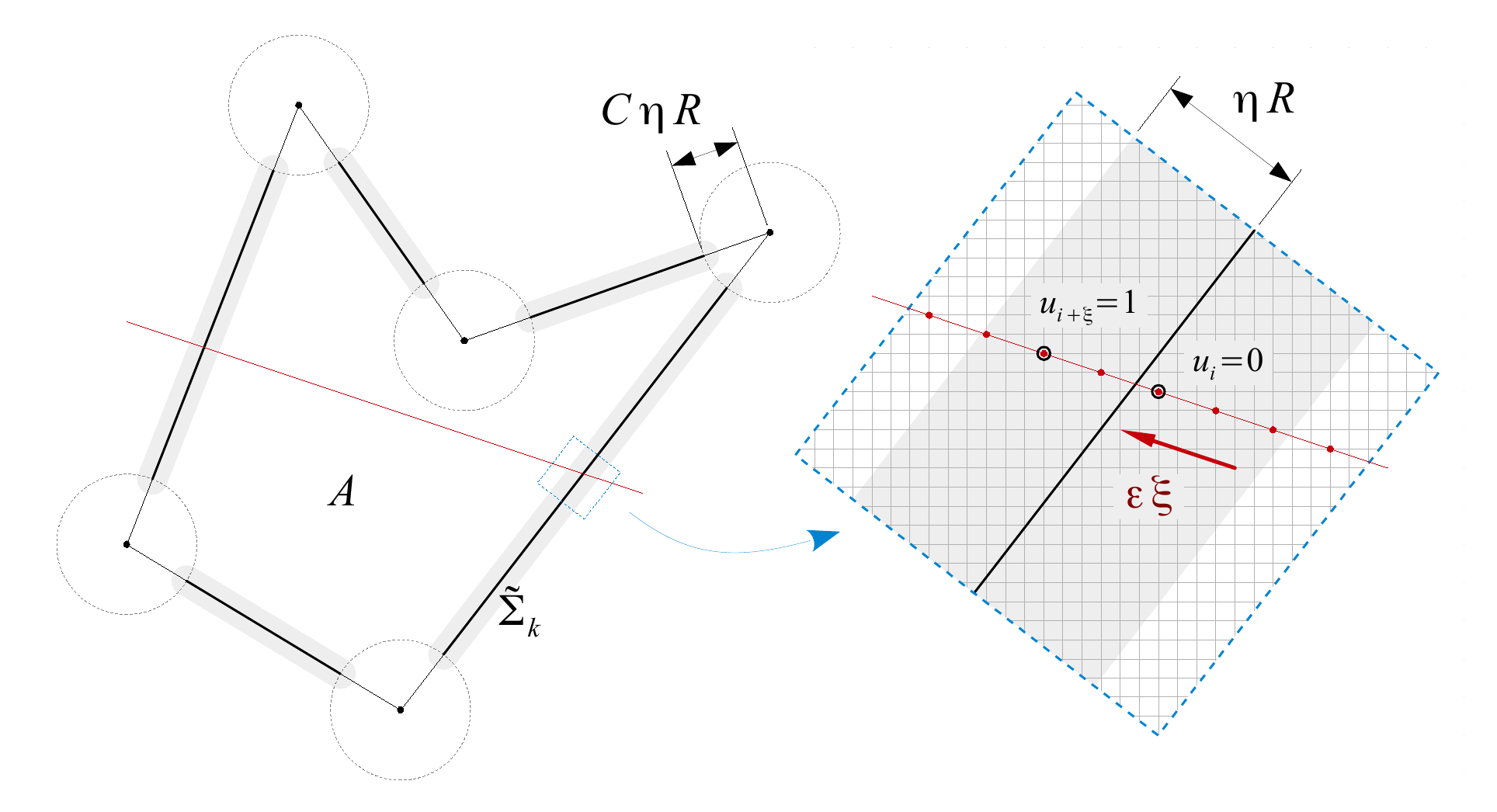}}
\caption{Upper-bound construction}
\label{Fig2}
\end{figure} 
\bigskip
The upper bound is obtained by a density argument (see \cite{GCB} Section 1.7). Hence, it suffices to treat the case of $A$ polyhedral. In this case it suffices to take (the interpolations) $u^\e_i=\chi_A(\e i)$ for $i\in\ZZ^d$. 
Indeed, 
we write $\partial A$ as a union of $N$ $d-1$-dimensional polytopes $\Sigma_k$ and we denote by $\nu_k$ the outer normal to $\Sigma_k$ and by $K$ the $d-2$-dimensional skeleton of $A$. 

We note that there exists a constant $C$ depending only on $A$ such that, for any $\eta,R>0$,
after removing the closed neighborhood
$K+\overline B_{C \eta R}$ from $\partial A$,  
we obtain a disjoint collection $\widetilde \Sigma_1,\dots, \widetilde \Sigma_N$ 
with $\widetilde \Sigma_k\subset \Sigma_k$ such that 
$$\Big(\widetilde \Sigma_k+B_{\eta R} \Big)\cap \Big(\widetilde \Sigma_{k'}+B_{\eta R}\Big)=\emptyset 
\ \ \hbox{ for any } k\neq k'$$
(see Fig.~\ref{Fig2}). 
Hence, for any $\xi\in\ZZ^d$ with $|\e\xi|\le \eta R$, $k\in\{1,\ldots,N\}$, and 
$j\in\ZZ^d$ such that the line $\e j+\e\xi\RR$ intersects $\widetilde \Sigma_k$, 
the values $u^\e_i$ change only once on the points of the discrete lines $\e j+\e \xi\ZZ$ which lie in a $\eta R$ neighbourhood of $
\widetilde\Sigma_k$. We note that for lines intersecting $\Sigma_k$ at a point of distance not larger than $C\eta R$ from $K$, such changes of value are at most $N$; then, repeating the counting argument used in the lower bound, we obtain
\begin{eqnarray*}
&&\limsup_{\e\to 0}\frac{\e^{2d}}{\eta^{d+1}}\sum_{\substack{\xi\in\ZZ^d\\ |\e\xi|\leq\eta R}} \!\!a \Bigl(\frac{\e}{\eta}\xi\Bigr)\sum_{i\in\ZZ^d}|u^\e_{i+\xi}-u^\e_i|\\
&&\hspace{1cm}\leq\limsup_{\e\to 0}\Biggl(\sum_{k=1}^N \mathcal H^{d-1}(\Sigma_k)\frac{\e^{d}}{\eta^{d}}\sum_{\substack{\xi\in\ZZ^d\\ |\e\xi|\leq\eta R}} \!\!a \Bigl(\frac{\e}{\eta}\xi\Bigr)
\Bigr|\langle \frac{\e}{\eta}\xi, \nu_k\rangle\Bigl| +O(\eta R)\Biggr)\\
&&\hspace{1cm}\leq\limsup_{\e\to 0}\sum_{k=1}^N  \mathcal H^{d-1}(\Sigma_k)\int_{ \{|\xi|\leq R\}} a(\xi)
|\langle \xi, \nu_k\rangle|\, d\xi \\
&&\hspace{1cm}\leq \int_{\partial A}\varphi_a(\nu)d\HH^{d-1}.
\end{eqnarray*}
Since also for $|\e\xi|\ge \eta R$  the changes of value of $u^\e_i$ are at most $N$, we then get
$$\limsup_{\e\to 0}\frac{\e^{2d}}{\eta^{d+1}}\sum_{\substack{\xi\in\ZZ^d\\ |\e\xi|>\eta R}} \!\!a \Bigl(\frac{\e}{\eta}\xi\Bigr)\sum_{i\in\ZZ^d}|u^\e_{i+\xi}-u^\e_i|\leq 
N\HH^{d-1}(\partial A)\int_{\{|\xi|>R\}}a(\xi)|\xi|d\xi.
$$
Since this term vanishes as $R\to+\infty$ the upper bound follows.
\end{proof}

\begin{example}\label{example}\rm
If $a$ is radially symmetric, then we have 
\begin{equation}\label{fe-symm} 
F(A)= \sigma \HH^{d-1}(\partial^*A),
\end{equation}
where $\sigma$ is given by
\begin{equation}\label{fe-symm-s} 
\sigma=\int_{\RR^d}a(\xi)|\xi_1|d\xi.
\end{equation}

In particular, we may take $a= \chi_{B_1}$ the characteristic function of the unit ball in $\RR^d$. In this case the limit of 
\begin{equation}\label{fe-ball} 
F_\e(u)=\frac{\e^{2d}}{\eta^{d+1}}\sum_{i,j\in\mathbb Z^d\ |i-j|<\eta/\e} |u_i-u_j|.
\end{equation}
is given by 
\begin{equation}\label{fe-ball-s} 
\sigma=\int_{B_1}|\xi_1|d\xi.
\end{equation}
\end{example}

\begin{remark}[local version]\label{remark}\rm
If $\Omega\subset\RR^d$ is an open set with Lipschitz boundary we may define
\begin{equation}\label{fe-a-omega} 
F_\e(u)=\frac{\e^{2d}}{\eta^{d+1}}\sum_{i,j\in\mathbb Z^d\cap{1\over\e}\Omega} a^\e_{i-j}|u_i-u_j|.
\end{equation}
Then the $\Gamma$-limit is 
\begin{equation}\label{f-a-omega} 
F(A)=\int_{\Omega\cap\partial^* A} \varphi_a(\nu)d\HH^{d-1},
\end{equation}
with minor modifications in the proof.
\end{remark}

\bigskip

\noindent{\bf Acknowledgments.}
Andrea Braides acknowledges the MIUR Excellence Department Project awarded to the Department of Mathematics, University of Rome Tor Vergata, CUP E83C18000100006. Margherita Solci acknowledges 
the project ``Fondo di Ateneo per la ricerca 2019", funded by the University of Sassari. We thank the anonymous referee of \cite{BCS}, who drew our attention to the problem in this paper.

\end{document}